 \newtheorem{theorem}{Theorem}[section]
 \newtheorem{lemma}[theorem]{Lemma}
 \newtheorem{proposition}[theorem]{Proposition}
 \newtheorem{definition}{Definition}
 \newtheorem{assumption}{Assumption}
\newcommand{\real}{\mathbb{R}} 
\newcommand{\zDomain}{\mathcal{Z}}
\newcommand{\Domain}{\mathcal{X}\times\mathcal{Y}}
\newcommand{\realDomain}{\mathbb{R}^{n}\times\mathbb{R}^{m}}
\newcommand{\argmax}[2] {\mathrm{arg}\max_{#1}#2}
\newcommand{\argmin}[2] {\mathrm{arg}\min_{#1}#2}
\newcommand{\norm}[1]{\Vert #1 \Vert}
\DeclareMathOperator{\EX}{\mathbb{E}}
\DeclareMathOperator{\sW}{subW}
\DeclareMathOperator{\dist}{d}
\title{Online Saddle Point Tracking with Decision-Dependent Data}
\date{}
\author{Killian Wood\thanks{Department of Applied Mathematics, University of Colorado, Boulder, CO (killian.wood@colorado.edu).} 
\ and Emiliano Dall'Anese\thanks{Department of Electrical, Computer, and Energy Engineering, and Department of Applied Mathematics, University of Colorado, Boulder, CO (emiliano.dallanese@colorado.edu).}
}
\begin{document}
\maketitle
\begin{abstract}%
In this work, we consider a time-varying stochastic saddle point problem in which the objective is revealed 
sequentially, 
and the data distribution depends on the decision variables. Problems of this type express the distributional dependence via a distributional map, and are known to have two distinct types of solutions\textemdash saddle points and equilibrium points. We demonstrate that, under suitable conditions, online primal-dual type algorithms are capable of tracking equilibrium points. In contrast, since computing closed-form gradient of the objective requires knowledge of the distributional map, we offer an online stochastic primal-dual algorithm for tracking equilibrium trajectories. We provide bounds in expectation and in high probability, with the latter leveraging a sub-Weibull model for the gradient error. We illustrate our results on an electric vehicle charging problem where responsiveness to prices follows a location-scale family based distributional map. 
\end{abstract}


\section{Introduction}
The general goal of stochastic optimization is to find optimal decisions in systems with parameters dictated by data~\cite{nemirovski2009robust,shapiro2005complexity,zhang2021generalization}. In statistical learning, optimal decisions represent model parameters that best fit a mapping between feature and label data (see, e.g.,~\cite{gurbuzbalaban2020heavy,simsekli2019tail}). In the context of optimization of physical and dynamical systems, they may model externalities or system parameters that are predicted from data and are accompanied by given error statistics (see, e.g.,~\cite{berberich2020data,bianchin2021online,li2021online,}). To analyze these problems, works posit that the data distributions are stationary~\cite{birge2011introduction}; in modern machine learning and cyber-physical systems applications this assumption may be violated when population data shifts in response to previously deployed decisions, thus making said decisions sub-optimal. Hence the distribution is inextricably tied to the decision variables. 

This work considers the problem of tracking the solution trajectories for problems of the form:
\begin{equation}
\label{eq:problem_statement}
    \min_{x\in\mathcal{X}_{t}}\max_{y\in\mathcal{Y}_{t}} \left\{ F_{t}(x,y): = \underset{w\sim D_{t}(x,y)}{\EX}[f_{t}(x,y,w)] \right\}
\end{equation}
where $t$ is a time index, $\mathcal{X}_{t}\subseteq \real^{n}$ and $\mathcal{Y}_{t}\subseteq \real^{m}$ are convex and compact sets capturing time-varying constraints, $f_{t}:\realDomain\times\real^{k}\rightarrow \real$ is a strongly-convex-strongly-concave function revealed at time $t$, and $D_{t}: \real^{d}\times\real^{n}\rightarrow \mathcal{P}(\real^{k})$ is a distributional map that maps decision variables to the set of finite-first moment probability distributions supported on $\real^{k}$ denoted by $\mathcal{P}(\real^{k})$. Without loss of generality, we refer to the support of $w$ as $\real^{k}$ (even if $w$ is matrix valued, our analysis holds as $w$ is isomorphic to its vectorization over $\real^{k}$).


Examples of problems of the form~\eqref{eq:problem_statement} emerge in cost maximization in competitive markets, where the (stochastic) demand shifts in response to prices (see, e.g.,~\cite{maheshwari2021zeroth,turan2021competition}), and in applications in adversarial strategic classification, finance, energy systems, transportation networks, and ride-sharing\textemdash just to mention a few. Focusing on the first example, consider  a competition between two service providers in an area with $n$ distinct regions for which each provider seeks to maximize their relative revenue, and when the demand for each provider's service changes in response to the price variation set by both providers. This problem can be written as the saddle point problem 
\begin{equation}
\label{application:charging_market} 
    \min_{x\in\mathcal{X}_{t}} \max_{y\in\mathcal{Y}_{t}} \left\{ F_{t}(x,y) = \EX_{(a,b)\sim D_{t}(x,y) }
    \norm{\Gamma_{t}^{1} x}^{2} - \norm{\Gamma_{t}^{2} y}^{2} - \langle a + c_{t}, x \rangle + \langle b + c_{t} , y \rangle \right\} \, ,
\end{equation}
where $x=(x_{i})_{i=1}^n$ and $y=(y_{i})_{i=1}^n$ are vectors of price deviations from a nominal value for providers one and two respectively (components $x_{i}$ and $y_{i}$ are the prices in region $i\in[n]$); $\Gamma_{t}^{1},\Gamma_{t}^{2}\in\real^{n\times n}$ are the charging rate utility matrices; $c_{t}\in\real^{n}$ is the location-based utility vector (i.e., cost of operation); and $a,b\in\real^{n}$ are changes in demand in each region (in response to price changes) with distributions $a \overset{d}{=} a^{t}_{0} + A^{t}_{1}x + B^{t}_{1}y$, and $b \overset{d}{=} b^{t}_{0} + A^{t}_{2}x + B^{t}_{2}y$. Here $a^{t}_{0}$ and $b^{t}_{0}$ are random variables drawn from zero-mean stationary distributions.

Classical solutions to~\eqref{eq:problem_statement} are saddle points, which we denote $z_{t}^{*}=(x_{t}^{*}, y_{t}^{*})\in \mathcal{X}_{t} \times \mathcal{Y}_{t}$. Under appropriate conditions, namely minimax equality, saddle points satisfy 
\begin{equation}
    x_{t}^{*}  \in \argmin{x\in\mathcal{X}_{t}}\max_{y\in\mathcal{Y}_{t}} F_{t}(x,y), 
    \ \ 
    y_{t}^{*}  \in \argmax{y\in\mathcal{Y}_{t}} \min_{x\in\mathcal{X}_{t}} F_{t} (x,y).
\end{equation}
In this setting, saddle points are optimal decisions that effectively anticipate the distributional shift, and hence are optimal even after the data distribution has changed in the system. While these are ideal, finding them is typically computationally intractable. While sufficient conditions for their existence and uniqueness have been studied, guarantees for convergence to saddle points are only approximate or require explicit knowledge of a model for the distributional map \cite{narang2022learning, wood2022saddle}. A common heuristic to overcome distributional shift in general is to repeatedly retrain the optimal decisions each time the distribution shifts. This amounts to forming a sequence $\{z_{t}^{\ell}\}_{\ell\geq 0}=\{(x_{t}^{\ell}, y_{t}^{\ell})\}_{\ell\geq 0}$ at each time $t$ defined by 
\begin{equation}
    \begin{aligned}
    \label{eqn:repeated_retraining}
   & x_{t}^{\ell+1} \in \argmin{x\in\mathcal{X}_{t}}\max_{y\in\mathcal{Y}_{t}} \underset{w\sim D_{t}(x_{t}^{\ell},y_{t}^{\ell})}{\EX} [f_{t}(x,y,w)], \\ 
   & y_{t}^{\ell+1} \in \argmax{y\in\mathcal{Y}_{t}}\min_{x\in\mathcal{X}_{t}} \underset{w\sim D_{t}(x_{t}^{\ell},y_{t}^{\ell})}{\EX} [ f_{t}(x,y,w)].
    \end{aligned}
\end{equation}

The fixed points of this repeated retraining procedure have been coined \textit{equilibrium points}, and are known to exist under mild conditions. In what follows we provide algorithms capable of tracking the equilibrium point trajectory $\{\bar{z}_{t}\} = \{\bar{x}_{t}, \bar{y}_{t}\}$ without requiring that we take the sequences in \ref{eqn:repeated_retraining} to convergence ($\ell\to\infty)$. This will be crucial for our online setting, as we assume that each time $t$, a new function and distributional map arrive (\cite{besbes2015non,cao2020online,dall2020optimization,jadbabaie2015online,shames2020online,wood2021online}). 

\subsection{Related Work}

\textbf{Stochastic Saddle Point Problems.} Algorithms for computing saddle points can be loosely catagorized as primal-dual based  or proximal based~\cite{ koshal2011multiuser, mokhtari2020unified, nemirovski2004prox, nemirovski2009robust, zhang2021robust}. 
Some works seek to find approximate saddle points by analyzing a saddle point gap. We conduct our analysis in a setting in which solutions are known to be unique, so we simply track them. Our analysis is primarily conducted through variational analysis \cite{rockafellar2009variational}. Hence we define the appropriate gradient maps and demonstrate that solutions to our problems are the solution to the variational inequalities induced by said gradient maps. 

\textbf{Decision Dependent Distributions.} This work is most closely related to the literature on stochastic optimization with decision dependent distributions, or its counterpart in learning, performative prediction. The problem of finding optimal decisions that are robust to decision-dependent data has been studied extensively, and in many distinct settings: minimization problems \cite{drusvyatskiy2022stochastic,perdomo2020performative}, saddle-point problems~\cite{wood2022saddle}, games \cite{narang2022learning}, online \cite{wood2021online}, time-varying decay \cite{ray2022decision}. Relative to the existing work on saddle point problems in the literature, this work considers problems for which the objective, constraints and distribution are time-varying and revealed sequentially in time. The work on games is related, as specific instances of games such as two-player zero-sum may be cast into a saddle point problem. Saddle point problems however are however not a strict subset of games as they exist in their own right; arising from constrained minimization problems, etc. The most obvious inspiration for this work is that of \cite{wood2022saddle}, as the setting in this work is precisely stochastic saddle point problems with decision-dependent distributions for time-invariant problems. Relative to this work, the results we present here are extensions to the online setting where analysis requires handling of additional noise due to solution drift. 

\textbf{Online Convex Optimization.} Relevant to works on online optimization that are concerned with tracking trajectories (see the representative works \cite{,cutler2021stochastic,mokhtari2016online,madden2021bounds,popkov2005gradient,selvaratnam2018Numerical}) or given comparator sequences~\cite{jadbabaie2015online}; another line of work is concerned with finding a sequence of decision that minimize a suitable dynamic regret metric \cite{hazan2019introduction}. Our metric is the distance to the solution of~\eqref{eq:problem_statement} at the current iteration, where we incorporate the drift of the solution trajectory. We account for the time-variability of the solution by incorporating the solution drift into our guarantees. 

\subsection{Contributions} Our contributions are as follows.

\noindent \emph{(c1)} \textit{The Online Equilibrium Problem}. We propose a notion of equilibrium points for the time-varying saddle-point problem in \ref{eq:problem_statement}, provide conditions to guarantee existence and uniqueness, and provide bounds for the distance between the unique equilibrium points~\eqref{eq:problem_statement}.

\noindent \emph{(c2)} \textit{Online Algorithms}. We demonstrate that primal-dual algorithms, using the gradients of $f_{t}$, are effective at finding equilibrium points when the stochastic objective $f_{t}$ is strongly-convex-strongly-concave for any realization of $w$. First, we demonstrate effective tracking of a conceptual algorithm using full gradient information. We then demonstrate that a stochastic algorithm tracks equilibrium points with additional noise due to estimation. Furthermore, we provide provide expectation bounds and high probability bounds that hold for each iteration.


\noindent \emph{(c3)} \textit{Experiments}. We illustrate our results on the electric vehicle charging problem in \eqref{application:charging_market} by incorporating synthetic demand data from \cite{gilleran2021electric}. Here, the demand changes in response to prices with a location-scale family based distributional map.


\section{Equilibrium Points}

In this section we define the equilibrium problem, the fixed points of the repeated retraining heuristic in \ref{eqn:repeated_retraining}, and provide sufficient conditions for their existence. We start from the definition of equilibrium points. 
\begin{definition}{\textbf{\textit{(Equilibrium Points)}}} 
\label{def:equilibrium}
A pair $(\bar{x}_{t},\bar{y}_{t})\in\Domain$ is an equilibrium point if:
\begin{equation}
    \begin{aligned}
    & \bar{x}_{t} \in \arg\min_{x\in\mathcal{X}_{t}} \left\{\max_{y\in\mathcal{Y}_{t}} \underset{w\sim D_{t}(\bar{x}_{t},\bar{y}_{t})}{\EX} [f_{t}(x,y,w)] \right\}, \\
    & \bar{y}_{t} \in \arg\max_{y\in\mathcal{Y}_{t}}\left\{\min_{x\in\mathcal{X}_{t}} \underset{w\sim D_{t}(\bar{x}_{t},\bar{y}_{t})}{\EX} [ f_{t}(x,y,w)] \right\}.
    \end{aligned}
\end{equation}
Sequences of equilibrium points are defined as $(\bar{x}_{t}, \bar{y}_{t})_{t \in \mathbb{N}}$. \hfill $\Box$
\end{definition}
In essence, equilibrium points are the solutions to the stationary saddle point problem that they induce. In this way, they are optimal decisions when data distribution is in state $D_{t}(\bar{x}_{t},\bar{y}_{t})$ but need not be optimal otherwise. Existence of these points is contingent on the distributional function being continuous on the set of probability distributions, and $f_{t}$ being at least convex-concave.

\begin{theorem}{(\textit{\textbf{Existence of Equilibrium Points}})}
\label{thm:existence}
Suppose that the following assumptions hold at time $t\geq0$:  

\noindent i) $x \mapsto f_{t}(x,y,w)$ is convex in $x$ for all $y\in\mathcal{Y}_{t}$ and for any realization of $w$; 

\noindent ii) $y \mapsto f_{t}(x,y,w)$ is concave in $y$ for all $x\in\mathcal{X}_{t}$ and for any realization of $w$; 

\noindent iii) $(x,y) \mapsto f_{t}(x,y,w)$ is continuous on $\Domain$ for any given $w$; 

\noindent iv) the sets $\mathcal{X}_{t}\subset\real^{n},\mathcal{Y}_{t}\subset\real^{n}$ are convex compact subsets; 

\noindent v) the distributional map $D_{t}:\zDomain\to (\mathcal{P}(M),W_{1})$ is continuous. 

\noindent Then the set of equilibrium points is nonempty and compact.  \hfill $\Box$
\end{theorem}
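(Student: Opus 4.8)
The plan is to read the equilibrium set as the fixed‑point set of the ``retraining correspondence'' and to invoke Kakutani's fixed point theorem. For a parameter $\bar z = (\bar x,\bar y)$ in the compact set $\zDomain := \mathcal{X}_{t}\times\mathcal{Y}_{t}$, define the stationary objective $G(x,y\,;\,\bar z) := \EX_{w\sim D_{t}(\bar x,\bar y)}[f_{t}(x,y,w)]$ and let $\Phi(\bar z)\subseteq\zDomain$ denote the set of saddle points of $\min_{x\in\mathcal{X}_{t}}\max_{y\in\mathcal{Y}_{t}} G(x,y\,;\,\bar z)$. By Definition~\ref{def:equilibrium}, $(\bar x_{t},\bar y_{t})$ is an equilibrium point if and only if $\bar z_{t}\in\Phi(\bar z_{t})$, so it suffices to show that $\Phi$ has a fixed point and that its fixed‑point set is compact.

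First I would record the structural properties of $G$ and of $\Phi$. Assumptions (i)--(ii) make $G(\cdot,\cdot\,;\,\bar z)$ convex--concave for each fixed $\bar z$, and (iv) makes the domain convex and compact; Sion's minimax theorem then gives the minimax equality and guarantees that $\Phi(\bar z)$ is nonempty and equals a product $X^{*}(\bar z)\times Y^{*}(\bar z)$ of convex compact sets. The crucial point is joint continuity of $(x,y,\bar z)\mapsto G(x,y\,;\,\bar z)$: continuity in $(x,y)$ follows from (iii) together with a standard dominated‑convergence / uniform‑integrability argument under the blanket regularity of $f_{t}$ in $w$, while continuity in $\bar z$ comes from (v) combined with Kantorovich--Rubinstein duality, $\lvert\EX_{\mu}[g]-\EX_{\nu}[g]\rvert\le \mathrm{Lip}_{w}(g)\,W_{1}(\mu,\nu)$, so that $W_{1}$‑convergence $D_{t}(\bar z^{j})\to D_{t}(\bar z)$ transfers to convergence of the expectations, and uniformly in $(x,y)$ over the compact primal--dual domain.

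Next I would verify the Kakutani hypotheses for the self‑map $\Phi$ of $\zDomain$. Using the minimax equality, $(x,y)\in\Phi(\bar z)$ iff $x\in\arg\min_{x'\in\mathcal{X}_{t}} p(x'\,;\,\bar z)$ and $y\in\arg\max_{y'\in\mathcal{Y}_{t}} q(y'\,;\,\bar z)$, where $p(x\,;\,\bar z):=\max_{y'\in\mathcal{Y}_{t}} G(x,y'\,;\,\bar z)$ is convex in $x$ and $q(y\,;\,\bar z):=\min_{x'\in\mathcal{X}_{t}} G(x',y\,;\,\bar z)$ is concave in $y$. By Berge's maximum theorem (applying the joint continuity of $G$ and compactness of the inner domains), $p$ and $q$ are continuous and the associated argmin/argmax correspondences are upper hemicontinuous with nonempty, compact, convex values; hence $\Phi$ is a nonempty‑, convex‑, compact‑valued, upper hemicontinuous self‑map of the nonempty convex compact set $\zDomain$, and Kakutani yields $\bar z_{t}\in\Phi(\bar z_{t})$. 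Finally, upper hemicontinuity into a compact set means $\Phi$ has closed graph, so the equilibrium set is the intersection of that graph with the diagonal of $\zDomain\times\zDomain$, hence closed, and being contained in the compact set $\zDomain$ it is compact.

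I expect the main obstacle to be precisely the joint continuity of $G$ — in particular, transferring the $W_{1}$‑continuity of the distributional map to continuity of $(x,y)\mapsto\EX_{w\sim D_{t}(\bar x,\bar y)}[f_{t}(x,y,w)]$. This needs a quantitative handle (a Lipschitz or growth bound in $w$) on $f_{t}$ that is not among the listed hypotheses and must be supplied by a standing assumption, and it must hold uniformly over the compact domain so that Berge's theorem applies cleanly. Once this continuity is in hand, the remaining steps — Sion, Berge, Kakutani, and closedness of the fixed‑point set — are routine.
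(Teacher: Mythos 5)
Your proposal follows exactly the route the paper indicates (but does not write out): cast equilibria as fixed points of the retraining correspondence and verify the hypotheses of Kakutani's theorem, with Sion and Berge supplying nonempty convex compact values and upper hemicontinuity, and closedness of the fixed-point set inside the compact domain giving compactness. You are also right that conditions (i)--(v) alone do not yield joint continuity of $(x,y,\bar z)\mapsto \EX_{w\sim D_{t}(\bar z)}[f_{t}(x,y,w)]$; a Lipschitz (or comparable growth) condition on $f_{t}$ in $w$ is needed to exploit the $W_{1}$-continuity of $D_{t}$ via Kantorovich--Rubinstein duality, and the paper implicitly inherits this from the hypotheses of the cited result in \cite{wood2022saddle} rather than stating it here.
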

The proof follows from the fact that 
equilibrium points exist for each problem at time $t$ due to \cite[Theorem 2.10]{wood2022saddle}. The proof strategy amounts to demonstrating that the repeated retraining map satisfies Kakutani's Fixed Point Theorem; see, e.g., \cite[Corollary 17.55]{guide2006infinite}; it is not provided due to space limitations.

\subsection{Theoretical Framework}

In light of our discussion on existence of equilibrium points, we outline the assumptions and some results that will be necessary later in our analysis. For notational convenience, we will refer to the stacked variable $z = (x,y)$ and the Cartesian product set of constraints $\zDomain=\Domain$. We will rely on the following assumptions to hold at each time $t$ throughout this work. 

\begin{assumption}{(\textit{\textbf{Strong-Convexity-Strong-Concavity}})}\label{assumption_one:strong}
The function $(x,y) \mapsto f_{t}(x,y,w)$ is continuously differentiable over $\realDomain$ for any realization of $w$. The function $(x,y) \mapsto f_{t}(x,y,w)$ is  $\gamma$-strongly-convex-strongly-concave, for any realization of $w$; that is, $f_{t}$ is $\gamma$-strongly-convex in $x$ for all $y\in\real^{m}$ and $\gamma$-strongly-concave in $y$ for all $x\in\real^{n}$. 
\hfill $\Box$
\end{assumption}

\begin{assumption}{(\textit{\textbf{Joint Smoothness}})}\label{assumption_two:smooth}
The map $g_{t}(z,w) := (\nabla_{x}f_{t}(z,w),-\nabla_{y} f_{t}(z,w))$ is $L$-Lipschitz in $z$ and $w$. Namely, 
    $\norm{ g_{t}(z,w) - g_{t}(z',w)  }  \leq L \norm{z-z'}$, 
    $\norm{ g_{t}(z,w) - g_{t}(z,w')  }  \leq L \dist(w,w')$, 
for any $z,z'\in\realDomain$ and $w,w'$ supported on $\real^{k}$, for some $L \geq 0$, where $\dist:\real^{k}\times \real^{k} \rightarrow \real$ is some chosen metric on $\real^{k}$. 
\hfill $\Box$
\end{assumption}

\begin{assumption}{(\textit{\textbf{Lipschitz-Continuous Distributional Map}})} \label{assumption_three:distribution_map}
The distributional maps \\ $D_{t}:\realDomain \rightarrow\mathcal{P}(M)$ are $\varepsilon$-Lipschitz. Namely, 
    $W_{1}(D_{t}(z),D_{t}(z'))\leq \varepsilon\norm{z-z'}$, 
for any $z,z'\in\realDomain$, where $W_{1}$ is the Wasserstein-1 distance.  
\hfill $\Box$
\end{assumption}

\begin{assumption}{(\textit{\textbf{Compact Sets}})} 
\label{assumption_four:sets}
The sets $\mathcal{X}_t\subset\real^{n}$ and $\mathcal{Y}_{t}\subset\real^{m}$ are compact and convex. 
\hfill $\Box$
\end{assumption}

\begin{assumption}{(\textit{\textbf{Bounded Drift}})} 
\label{assumption_five:bounded_drift}
There exists a $\Delta>0$ such that the equilibrium drift sequence defined by $\Delta_{t} : = \norm{\bar{z}_{t+1} - \bar{z}_{t}}$ is uniformly bounded by $\Delta$. Namely, $\Delta_{t}\leq \Delta$ for all $t\geq 0$.
\hfill $\Box$
\end{assumption}

These assumptions provided are sufficient to guarantee uniqueness of the equilibrium point, and convergence of primal-dual algorithms in the batch setting; see \cite{wood2022saddle}.

\begin{theorem}{(\textit{\textbf{Equilibrium Point Uniqueness}})} If Assumptions \ref{assumption_one:strong}-\ref{assumption_four:sets} are satisfied such that $\varepsilon L < \gamma$, then a unique equilibrium point exists.
\end{theorem}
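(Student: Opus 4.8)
The plan is to recast equilibrium points as fixed points of the repeated-retraining map and to show this map is a contraction whenever $\varepsilon L < \gamma$; uniqueness then follows from the Banach fixed point theorem (existence being already guaranteed by Theorem~\ref{thm:existence}). Fix the time index $t$ and write $\zDomain_t := \mathcal{X}_t\times\mathcal{Y}_t$. For each $z'\in\zDomain_t$, consider the stationary saddle point problem $\min_{x\in\mathcal{X}_t}\max_{y\in\mathcal{Y}_t} \EX_{w\sim D_t(z')}[f_t(x,y,w)]$. Since $f_t(\cdot,\cdot,w)$ is $\gamma$-strongly-convex-strongly-concave for every realization of $w$ (Assumption~\ref{assumption_one:strong}), so is the expectation by linearity, and together with compactness and convexity of $\mathcal{X}_t,\mathcal{Y}_t$ (Assumption~\ref{assumption_four:sets}) and Sion's minimax theorem this problem has a unique saddle point, which we denote $\phi_t(z')\in\zDomain_t$. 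By Definition~\ref{def:equilibrium}, $\bar z_t$ is an equilibrium point if and only if $\phi_t(\bar z_t)=\bar z_t$, so it suffices to prove that $\phi_t$ is a contraction on $\zDomain_t$.

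Next I would characterize $\phi_t(z')$ variationally. Writing $G_{z'}(z):=\EX_{w\sim D_t(z')}[g_t(z,w)]$ for the gradient map induced by the distribution $D_t(z')$, strong-convexity-strong-concavity makes $G_{z'}$ $\gamma$-strongly monotone, and $z_\star=\phi_t(z')$ is the unique point with $\langle G_{z'}(z_\star), z-z_\star\rangle\geq 0$ for all $z\in\zDomain_t$. Now take $z_1',z_2'\in\zDomain_t$, set $z_i=\phi_t(z_i')$, add the two variational inequalities (testing the first at $z_2$ and the second at $z_1$), and split $G_{z_1'}(z_1)-G_{z_2'}(z_2)=[G_{z_1'}(z_1)-G_{z_1'}(z_2)]+[G_{z_1'}(z_2)-G_{z_2'}(z_2)]$. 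Applying $\gamma$-strong monotonicity to the first bracket and Cauchy--Schwarz to the second yields $\gamma\norm{z_1-z_2}\leq\norm{G_{z_1'}(z_2)-G_{z_2'}(z_2)}$.

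It remains to bound the right-hand side through the distributional map. Since $g_t(z_2,\cdot)$ is $L$-Lipschitz in $w$ with respect to $\dist$ (Assumption~\ref{assumption_two:smooth}), for any unit vector $u$ the scalar map $w\mapsto\langle u,g_t(z_2,w)\rangle$ is $L$-Lipschitz, so the Kantorovich--Rubinstein duality for $W_1$ (taken with respect to $\dist$) gives $|\langle u, G_{z_1'}(z_2)-G_{z_2'}(z_2)\rangle|\leq L\,W_1(D_t(z_1'),D_t(z_2'))$; taking the supremum over $u$ and then invoking $\varepsilon$-Lipschitzness of $D_t$ (Assumption~\ref{assumption_three:distribution_map}) gives $\norm{G_{z_1'}(z_2)-G_{z_2'}(z_2)}\leq\varepsilon L\norm{z_1'-z_2'}$. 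Combining, $\norm{\phi_t(z_1')-\phi_t(z_2')}\leq(\varepsilon L/\gamma)\norm{z_1'-z_2'}$, which is a contraction precisely because $\varepsilon L<\gamma$; as $\zDomain_t$ is compact, hence a complete metric space, Banach's fixed point theorem delivers the unique fixed point $\bar z_t$, and repeating the argument at every $t$ gives uniqueness throughout. The main obstacle is the step transferring a bound on expected gradients into a bound in the $W_1$ geometry — passing cleanly from the scalar Kantorovich--Rubinstein inequality to the vector-valued gradient difference, and keeping the metrics in Assumptions~\ref{assumption_two:smooth} and~\ref{assumption_three:distribution_map} consistent — together with the bookkeeping needed to see that $\phi_t$ is single-valued and self-mapping; the rest is a routine strong-monotonicity estimate.
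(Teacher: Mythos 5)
Your proposal is correct and follows essentially the same route as the paper: it shows the repeated-retraining map is a strict contraction with modulus $\varepsilon L/\gamma<1$ (the paper's Gradient Deviation property, which you re-derive via Kantorovich--Rubinstein duality, combined with strong monotonicity of the induced variational inequality) and then invokes the Banach--Picard fixed point theorem. The details you fill in match the argument the paper defers to \cite{wood2022saddle}.
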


Proof of this results amounts to showing that the repeated retraining heuristic in \ref{eqn:repeated_retraining} is a strict contraction and hence satisfies the Banach-Picard Fixed Point Theorem. For a detailed proof, see \cite{wood2022saddle}.

Given that the data distribution is shifting, it is necessary to characterize this shift and its effect on the gradient. The key to computing equilibrium points will be the gradients of $f_{t}$. We note that this is only one term required to compute the gradients of $F_{t}$, effectively ignoring the dependence of $D_{t}$ on the decision variables. For now, we will denote the decoupled gradient map as the function $G_{t}$ defined by 
\begin{equation}
\label{eqn:equilirbium_gradient}
    G_{t}(z;z') := \underset{w\sim D_{t}(z')}{\EX} g_{t}(z,w) = \left(
    \underset{w\sim D_{t}(z')}{\EX} \nabla_{x} f_{t}(z,w),  \underset{w\sim D_{t}(z')}{\EX} -\nabla_{y}f_{t}(z,w)
    \right)    
\end{equation}
for all $z,z'\in\realDomain$. Note that we refer to this gradient map as ``decoupled'' as we separate the decision variable in the stochastic objective and the distributional map. This will allow us to characterize these behaviors separately.
\begin{lemma}{(\textit{\textbf{Gradient Map Characterization}})}
\label{lemma:gradient_characterization}
If Assumptions \ref{assumption_one:strong}-\ref{assumption_four:sets} hold, then:
\begin{enumerate}
    \item (\textbf{Gradient Deviation}) For any fixed $\hat{z}\in\zDomain$, the map $z \mapsto G_{t}(\hat{z},z)$ is $\varepsilon L$-Lipschitz-continuous. That is, $\norm{G_{t}(\hat{z};z)-G_{t}(\hat{z};z')}\leq \varepsilon L \norm{z-z'}$, 
    for all $z,z'\in\zDomain$.
    \item (\textbf{Strong-Monotonicity}) The map $z\mapsto G_{t}(z,z)$ is $(\gamma-\varepsilon L)$-strongly-monotonic.

    \item (\textbf{Lipschitz-Continuity}) The map $z\mapsto G_{t}(z,z)$ is $(L+\varepsilon L)$-Lipschitz Continuous. \hfill $\Box$
\end{enumerate}
\label{lem:gradient_deviations}
\end{lemma}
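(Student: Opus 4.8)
The plan is to prove the three claims directly from the definitions of $G_t$ and $g_t$ together with Assumptions \ref{assumption_one:strong}--\ref{assumption_four:sets}, using the Kantorovich--Rubinstein dual characterization of $W_1$ to convert the Lipschitz bound on $g_t$ in the $w$-argument into a bound on expectations under different distributions. Throughout I will write $G_t(z;z') = \EX_{w\sim D_t(z')} g_t(z,w)$ and exploit that expectation is linear and $1$-Lipschitz-like with respect to norms (Jensen).

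For the first claim (\textbf{Gradient Deviation}), fix $\hat z$ and estimate $\norm{G_t(\hat z;z) - G_t(\hat z;z')} = \norm{\EX_{w\sim D_t(z)} g_t(\hat z,w) - \EX_{w\sim D_t(z')} g_t(\hat z,w)}$. The standard device is to couple: for any coupling $\pi$ of $D_t(z)$ and $D_t(z')$, the difference of expectations equals $\EX_{(w,w')\sim\pi}[g_t(\hat z,w) - g_t(\hat z,w')]$, so by Jensen and Assumption \ref{assumption_two:smooth}'s $w$-Lipschitz bound the norm is at most $L\,\EX_{(w,w')\sim\pi}\dist(w,w')$; taking the infimum over couplings gives $L\,W_1(D_t(z),D_t(z'))$, and then Assumption \ref{assumption_three:distribution_map} yields $\varepsilon L\norm{z-z'}$. (Equivalently, since each coordinate of $g_t(\hat z,\cdot)$ is $L$-Lipschitz in $\dist$, one can apply Kantorovich--Rubinstein coordinatewise; the coupling argument is cleaner and avoids dimension factors.)

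For the second claim (\textbf{Strong-Monotonicity}), I would add and subtract the ``mixed'' term $G_t(z;z')$ to split $G_t(z;z) - G_t(z';z')$ into $\big(G_t(z;z) - G_t(z;z')\big) + \big(G_t(z;z') - G_t(z';z')\big)$. Pairing with $z - z'$: the second bracket is the gradient map of the fixed convex-concave objective $\EX_{w\sim D_t(z')} f_t(\cdot,w)$ evaluated at $z$ and $z'$, which is $\gamma$-strongly monotone because $f_t(\cdot,w)$ is $\gamma$-strongly-convex-strongly-concave for every $w$ (strong monotonicity is preserved under expectation), contributing $\gamma\norm{z-z'}^2$; the first bracket, by Cauchy--Schwarz and claim 1 (with $\hat z = z$), is bounded below by $-\varepsilon L\norm{z-z'}^2$. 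Summing gives $\langle G_t(z;z) - G_t(z';z'), z - z'\rangle \ge (\gamma - \varepsilon L)\norm{z-z'}^2$. The third claim (\textbf{Lipschitz-Continuity}) uses the same splitting: $\norm{G_t(z;z') - G_t(z';z')} \le L\norm{z-z'}$ because $g_t(\cdot,w)$ is $L$-Lipschitz in $z$ uniformly in $w$ and expectation preserves this, while $\norm{G_t(z;z) - G_t(z;z')} \le \varepsilon L\norm{z-z'}$ by claim 1; the triangle inequality gives $(L+\varepsilon L)\norm{z-z'}$.

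The main obstacle is purely the Wasserstein step in claim 1: making precise that $\norm{\EX_{w\sim\mu} h(w) - \EX_{w\sim\nu} h(w)} \le (\mathrm{Lip}\,h)\,W_1(\mu,\nu)$ for a vector-valued $h$ that is Lipschitz with respect to the chosen metric $\dist$ on $\real^k$ (not necessarily Euclidean), and checking the first-moment/integrability conditions so the expectations and couplings are well defined — this is guaranteed since $D_t$ maps into finite-first-moment distributions and $g_t(\hat z,\cdot)$ grows at most linearly by Assumption \ref{assumption_two:smooth}. Everything after that is routine add-and-subtract plus Cauchy--Schwarz, so I would present claim 1 in full detail and treat claims 2 and 3 as short corollaries of claim 1 and the per-realization convexity/smoothness of $f_t$.
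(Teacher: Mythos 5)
Your proof is correct and follows essentially the same route the paper sketches (and defers to \cite{wood2022saddle} for): the coupling/Kantorovich--Rubinstein argument combining Assumptions \ref{assumption_two:smooth} and \ref{assumption_three:distribution_map} for the Gradient Deviation bound, followed by the add-and-subtract decomposition through the mixed term $G_{t}(z;z')$ to obtain strong monotonicity and Lipschitz continuity as immediate corollaries. Your attention to the integrability and vector-valued-Lipschitz subtleties in the Wasserstein step is a welcome addition of rigor, but there is no substantive difference in approach.
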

Proof of the Gradient Deviation property follows by combining the properties allowed from joint smoothness and lipschitz continuity of the distributional map (Assumptions \ref{assumption_two:smooth} and \ref{assumption_three:distribution_map} respectively. For a detailed proof, we refer the reader to \cite{wood2022saddle}. Strong monotonicity and Lipschitz continuity of $z\mapsto G_{t}(z;z)$  then follow immediately. With this lemma, we can effectively deal with the decoupled gradient map by passing variables into both the $D_{t}$ and $g_{t}$ simultaneously. Going forward, we will simply write $G_{t}$ to mean the gradient map given by $z\mapsto G_{t}(z;z)$.

\section{Online Algorithms}

\subsection{A Conceptual Primal-Dual Algorithm}
In this section, we show that if the decoupled gradient map $G_{t}$ is available, then tracking the equilibrium points is possible using a primal-dual algorithm. This provides a basis of comparison for our analysis in the next section where we use a stochastic gradient estimator in place of $G_{t}$. Indeed, we denote the equilibrium primal-dual algorithmic map by
\begin{equation}
    \label{eqn:primal_dual_map}
    \mathcal{G}_{t}(z) = \Pi_{\zDomain}\left( 
    z - \eta G_{t}(z)
    \right)
\end{equation}
so that the algorithm generates the sequence $\{z_{t}\}_{t\geq 0}$ defined by $z_{t+1} = \mathcal{G}_{t}(z_{t})$, $t \in \mathbb{N}$. 
To proceed, we observe that equilibrium points are the fixed points of the primal-dual algorithmic map.

\begin{proposition}{\textit{\textbf{(Fixed Point Characterization)}}}
\label{prop:equilibrium_fixed_point}
Let Assumptions \ref{assumption_one:strong}-\ref{assumption_four:sets} hold and suppose that $\frac{\varepsilon L }{\gamma}<1$. A point $\bar{z}_{t}\in\zDomain$ is an equilibrium point if and only if $\bar{z}_{t}=\mathcal{G}_{t}(\bar{z}_{t})$.
\hfill $\Box$
\end{proposition}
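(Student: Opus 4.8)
The plan is to show that both defining conditions of an equilibrium point in Definition~\ref{def:equilibrium} and the fixed-point equation $\bar z_t = \mathcal{G}_t(\bar z_t)$ are equivalent to the single variational inequality
\begin{equation}
\label{eqn:equilibrium_vi}
\langle G_{t}(\bar z_t),\, z - \bar z_t \rangle \geq 0 \quad \text{for all } z \in \zDomain ,
\end{equation}
where $G_{t}(\bar z_t) = G_{t}(\bar z_t;\bar z_t)$. Establishing this common reformulation reduces the proposition to two standard equivalences, which I would carry out in order.

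First I would argue that $\bar z_t$ satisfies Definition~\ref{def:equilibrium} if and only if it solves~\eqref{eqn:equilibrium_vi}. Freeze the distribution at $D_{t}(\bar z_t)$ and introduce the auxiliary objective $\Phi(x,y) := \EX_{w\sim D_{t}(\bar z_t)}[f_{t}(x,y,w)]$. By Assumptions~\ref{assumption_one:strong} and~\ref{assumption_four:sets}, $\Phi$ is continuous, convex in $x$, concave in $y$, and $\mathcal{X}_t,\mathcal{Y}_t$ are convex and compact, so Sion's minimax theorem gives $\min_x\max_y \Phi = \max_y\min_x \Phi$; a classical consequence is that the two inclusions in Definition~\ref{def:equilibrium} hold simultaneously if and only if $\bar z_t=(\bar x_t,\bar y_t)$ is a saddle point of $\Phi$ over $\zDomain$. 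Next, Assumption~\ref{assumption_one:strong} (differentiability of $f_t(\cdot,\cdot,w)$), Assumption~\ref{assumption_two:smooth} (Lipschitz gradients in $w$), and the finite-first-moment property of $D_{t}(\bar z_t)$ justify differentiating under the expectation, so that $\nabla_x\Phi = \EX_{w\sim D_{t}(\bar z_t)}\nabla_x f_t$ and $\nabla_y\Phi = \EX_{w\sim D_{t}(\bar z_t)}\nabla_y f_t$ — precisely the two blocks of $G_{t}(\bar z_t)$. Since $\Phi$ is differentiable and convex--concave over a product of convex sets, $\bar z_t$ is a saddle point if and only if $\langle \nabla_x\Phi(\bar z_t), x-\bar x_t\rangle \geq 0$ for all $x\in\mathcal{X}_t$ and $\langle -\nabla_y\Phi(\bar z_t), y-\bar y_t\rangle \geq 0$ for all $y\in\mathcal{Y}_t$; adding these and writing them in stacked form yields exactly~\eqref{eqn:equilibrium_vi}.

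Second I would invoke the variational (obtuse-angle) characterization of the Euclidean projection onto the closed convex set $\zDomain$: for any vector $v$ and any $z\in\zDomain$, one has $z = \Pi_{\zDomain}(z-\eta v)$ if and only if $\langle (z-\eta v)-z,\, w-z\rangle \leq 0$ for all $w\in\zDomain$, i.e. $\langle v, w-z\rangle \geq 0$ for all $w\in\zDomain$ (using $\eta>0$). Applying this with $v = G_{t}(\bar z_t)$ and $z=\bar z_t$ shows that $\bar z_t = \mathcal{G}_{t}(\bar z_t)$ if and only if~\eqref{eqn:equilibrium_vi} holds. Chaining the two equivalences proves the claim; note that the assumption $\varepsilon L/\gamma<1$ together with Lemma~\ref{lem:gradient_deviations} further guarantees such a $\bar z_t$ is unique, but uniqueness is not needed for the stated equivalence.

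The main obstacle is the first equivalence: one must verify carefully that the ``$\arg\min\max$ / $\arg\max\min$'' pair in Definition~\ref{def:equilibrium} genuinely collapses to the saddle-point property (this is where minimax equality, hence continuity, convexity--concavity and compactness, are essential), and that the interchange of gradient and expectation is legitimate so that the first-order conditions can be expressed through $G_{t}$ rather than through $\nabla\Phi$. The projection step in the second part is entirely routine.
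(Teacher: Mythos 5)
Your proof is correct and follows exactly the route the paper intends: the paper omits an explicit proof (deferring to the batch-setting analogue in \cite{wood2022saddle}) but announces its methodology as showing that solutions coincide with solutions of the variational inequality induced by the decoupled gradient map, which is precisely your two-step equivalence through $\langle G_{t}(\bar z_t), z-\bar z_t\rangle\geq 0$ via Sion's theorem plus the projection characterization. Your side remark that $\varepsilon L/\gamma<1$ is not needed for the equivalence itself (only for uniqueness) is also accurate.
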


This proposition will allow us to cast our analysis into a fixed point framework, using the equilibrium points as the fixed points of the distributional map. 

\begin{theorem}{(\textbf{\textit{Primal-Dual Tracking}})}
 Suppose that Assumptions \ref{assumption_one:strong}-\ref{assumption_four:sets} hold and that $\frac{\varepsilon L }{\gamma}<1$. Then the sequence $z_{t+1} = \mathcal{G}_{t}(z_{t})$ satisfies the bound 
\begin{equation}
\label{thm:equilibrium_contraction}
    \norm{z_{t} - \bar{z}_{t}} \leq \alpha^{t} \norm{z_{0}-\bar{z}_{0}} + (1-\alpha)^{-1} \Delta
\end{equation}
for any initial point $z_{0}\in\zDomain$, and  $\alpha :=\sqrt{1-\eta(\gamma-\varepsilon L)}$ provided that 
\begin{equation}
    \label{thm:convergence_stepsize_condition}
    \eta < \min \left\{ \frac{1}{\gamma - \varepsilon L} , 
    \frac{\gamma - \varepsilon L}{(1+\varepsilon)^{2}L^{2}} \right\}
\end{equation}
Furthermore, $\{z_{t}\}_{t \geq 0}$ ultimately tracks the sequence of unique equilibrium points $\{\bar{z}_{t}\}_{ t \geq 0}$ in the sense that
$\limsup_{t\to\infty}\norm{z_{t}-\bar{z}_{t}}\leq (1-\alpha)^{-1}\Delta$. 
\hfill $\Box$
\end{theorem}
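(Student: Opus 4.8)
The plan is to prove the tracking bound \eqref{thm:equilibrium_contraction} by a one-step contraction argument combined with a triangle inequality that separates the algorithmic progress from the drift of the equilibrium trajectory. First I would write, for the step from $t$ to $t+1$,
\[
\norm{z_{t+1} - \bar{z}_{t+1}} \leq \norm{z_{t+1} - \bar{z}_{t}} + \norm{\bar{z}_{t} - \bar{z}_{t+1}} = \norm{\mathcal{G}_{t}(z_{t}) - \bar{z}_{t}} + \Delta_{t},
\]
using the Bounded Drift Assumption \ref{assumption_five:bounded_drift} for the last term. By Proposition \ref{prop:equilibrium_fixed_point}, $\bar{z}_{t}$ is a fixed point of $\mathcal{G}_{t}$, so $\norm{\mathcal{G}_{t}(z_{t}) - \bar{z}_{t}} = \norm{\mathcal{G}_{t}(z_{t}) - \mathcal{G}_{t}(\bar{z}_{t})}$, and the whole argument reduces to showing $\mathcal{G}_{t}$ is a contraction with modulus $\alpha = \sqrt{1 - \eta(\gamma - \varepsilon L)}$.

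The core technical step is therefore the contraction estimate for the projected gradient map $\mathcal{G}_{t}(z) = \Pi_{\zDomain}(z - \eta G_{t}(z))$. Since the projection $\Pi_{\zDomain}$ is nonexpansive (by convexity of $\zDomain$, Assumption \ref{assumption_four:sets}), it suffices to bound $\norm{(z - \eta G_{t}(z)) - (z' - \eta G_{t}(z'))}^{2}$. Expanding,
\[
\norm{(z - z') - \eta(G_{t}(z) - G_{t}(z'))}^{2} = \norm{z - z'}^{2} - 2\eta\langle G_{t}(z) - G_{t}(z'), z - z'\rangle + \eta^{2}\norm{G_{t}(z) - G_{t}(z')}^{2}.
\]
Now I invoke Lemma \ref{lemma:gradient_characterization}: part 2 gives $\langle G_{t}(z) - G_{t}(z'), z - z'\rangle \geq (\gamma - \varepsilon L)\norm{z - z'}^{2}$, and part 3 gives $\norm{G_{t}(z) - G_{t}(z')}^{2} \leq (1+\varepsilon)^{2}L^{2}\norm{z - z'}^{2}$. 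Substituting yields the factor $1 - 2\eta(\gamma - \varepsilon L) + \eta^{2}(1+\varepsilon)^{2}L^{2}$. The step-size condition \eqref{thm:convergence_stepsize_condition}, specifically $\eta < (\gamma - \varepsilon L)/((1+\varepsilon)^{2}L^{2})$, ensures the $\eta^{2}$ term is dominated by $\eta(\gamma - \varepsilon L)$, so this factor is at most $1 - \eta(\gamma - \varepsilon L) = \alpha^{2}$; the other half of \eqref{thm:convergence_stepsize_condition}, $\eta < 1/(\gamma - \varepsilon L)$, guarantees $\alpha^{2} > 0$ and $\alpha \in [0,1)$ so the contraction is genuine. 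Taking square roots gives $\norm{\mathcal{G}_{t}(z) - \mathcal{G}_{t}(z')} \leq \alpha\norm{z - z'}$.

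Combining the two pieces gives the recursion $\norm{z_{t+1} - \bar{z}_{t+1}} \leq \alpha\norm{z_{t} - \bar{z}_{t}} + \Delta$, and unrolling from $t = 0$ produces
\[
\norm{z_{t} - \bar{z}_{t}} \leq \alpha^{t}\norm{z_{0} - \bar{z}_{0}} + \Delta\sum_{j=0}^{t-1}\alpha^{j} \leq \alpha^{t}\norm{z_{0} - \bar{z}_{0}} + (1-\alpha)^{-1}\Delta,
\]
which is \eqref{thm:equilibrium_contraction}; the $\limsup$ statement follows by letting $t \to \infty$ and using $\alpha < 1$. I expect the main obstacle to be purely bookkeeping rather than conceptual: verifying carefully that the two branches of the minimum in \eqref{thm:convergence_stepsize_condition} are exactly what is needed to make $1 - 2\eta(\gamma-\varepsilon L) + \eta^{2}(1+\varepsilon)^2 L^2 \le 1 - \eta(\gamma - \varepsilon L)$ hold with the stated $\alpha$, and confirming that strong monotonicity with modulus $\gamma - \varepsilon L$ is legitimately available here — which it is, since $\varepsilon L < \gamma$ is implied by $\varepsilon L/\gamma < 1$, so Lemma \ref{lemma:gradient_characterization}(2) applies. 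Everything else is a standard Banach–Picard-with-perturbation argument.
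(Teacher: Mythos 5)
Your proposal is correct and follows essentially the same route as the paper: the triangle-inequality split into algorithmic error plus drift, nonexpansiveness of the projection together with the fixed-point characterization, and then strong monotonicity and Lipschitz continuity of $G_{t}$ from Lemma \ref{lemma:gradient_characterization} to obtain the contraction factor $\alpha^{2}=1-\eta(\gamma-\varepsilon L)$ under the stated step-size condition. The only cosmetic difference is that the paper bundles the two operator properties into the single inequality $\langle z-z', G_{t}(z)-G_{t}(z')\rangle \geq \tfrac{\hat{\gamma}}{2}\norm{z-z'}^{2}+\tfrac{\hat{\gamma}}{2\hat{L}^{2}}\norm{G_{t}(z)-G_{t}(z')}^{2}$ before substituting, whereas you apply them separately — the resulting bound and step-size restriction are identical.
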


\begin{proof} It follows from the triangle inequality that 
$\norm{z_{t+1}-\bar{z}_{t+1} }
\leq \norm{z_{t+1} - \bar{z}_{t}} + \norm{\bar{z}_{t}-\bar{z}_{t+1}} 
= \norm{z_{t+1} - \bar{z}_{t}} + \Delta_{t}$,
and hence we simply need to bound $\norm{z_{t+1}-\bar{z}_{t}}$. We observe that 
    \begin{align*}
        \norm{z_{t+1}-\bar{z}_{t}}^{2} 
        & =  \norm{\Pi_{\zDomain}\left(z_{t}-\eta G_{t}(z_{t})\right) - \Pi_{\zDomain}\left(\bar{z}_{t}-\eta G_{t}(\bar{z}_{t})\right) }^{2} \\ 
        & \leq \norm{(z_{t}-\bar{z}_{t}) - \eta\left(G_{t}(z_{t})-G_{t}(\bar{z}_{t})\right)}^{2} \\ 
        & \leq \norm{z_{t}-\bar{z}_{t}}^{2} - 2\eta\langle z_{t} - \bar{z}_{t}, G_{t}(z_{t}) - G_{t}(\bar{z}_{t}) \rangle + \eta^{2}\norm{G_{t}(z_{t})-G_{t}(\bar{z}_{t})}^{2}.
    \end{align*}
If we denote $\hat{\gamma} = \gamma - \varepsilon L$ and $\hat{L} = L + \varepsilon L $, then from Lemma \ref{lemma:gradient_characterization} we have that $G_{t}$ is $\hat{\gamma}$-strongly monotone and $\hat{L}$-Lipschitz continuous. Combining these facts yields
\begin{equation*}
    \langle z_{t} - \bar{z}_{t}, G_{t}(z_{t}) - G_{t}(\bar{z}_{t}) \rangle 
    \geq \frac{\hat{\gamma}}{2}\norm{z_{t}-\bar{z}_{t}}^{2} + \frac{\hat{\gamma}}{2\hat{L}^{2}} \norm{ G_{t}(z_{t})-G_{t}(\bar{z}_{t})}^{2}.
 \end{equation*}
Substituting into the above yields
 \begin{align*}
 \norm{z_{t+1}-\bar{z}_{t}}^{2} 
 & \leq (1-\eta \hat{\gamma})\norm{z_{t}-\bar{z}_{t}}^{2} + 
 \eta\left( \eta - \frac{\hat{\gamma}}{\hat{L}^{2}}\right)\norm{G_{t}(z_{t})-G_{t}(\bar{z}_{t})}^{2} \leq  (1-\eta \hat{\gamma})\norm{z_{t}-\bar{z}_{t}}^{2} 
 \end{align*}
where the last inequality follows provided that $\eta\leq \hat{\gamma} / \hat{L}^{2} $. It follows that if $\eta < 1 / \hat{\gamma}$ as well, then $ 1 - \eta\hat{\gamma}< 1$ and the bound in Theorem \eqref{thm:equilibrium_contraction} follows. Considering the limit supremum of the bound in \eqref{thm:equilibrium_contraction} yields the result.
\end{proof}
We note that the noise due to the drift in \eqref{thm:equilibrium_contraction} increases as  we decrease the step size $\eta$. Hence it is impossible to completely remove this disturbance from the algorithm. This reflects intuition however as very small step sizes would make it difficult to ever reach the solution trajectory. Meanwhile, larger step sizes decrease this noise while simultaneously decreasing the rate at which we overcome the error $z_{t+1}-\bar{z}_{t}$ between successive iterates. We build on this intuition in our stochastic algorithm. This concludes our discussion of the conceptual primal-dual algorithm. In the next section, we demonstrate tracking of a stochastic primal-dual algorithm.

\subsection{A Stochastic Primal-Dual Algorithm}
In previous section, we demonstrated that a conceptual first-order algorithm is capable of tracking the trajectory of equilibrium point. We say conceptual because having access to full information in $G_{t}$ requires the ability to compute the expectation with respect to the distributional map $D_{t}$ at each algorithmic step\textemdash which is of course impractical. Hence, we are concerned with a more pragmatic setting in which we merely have access to a stochastic gradient oracle, which we will denote $H_{t}$. We make the implicit assumption throughout that $H_{t}$ is a function of the stochastic gradient function $g_{t}$ defined in \eqref{eqn:equilirbium_gradient}. Such functions are typically of the form

\begin{equation}
\label{eqn:gradient_estimator}
 H_{t}(z) = 
    \begin{cases} 
    g_{t}(z,w_{1}), \quad w_{1}\sim D_t(z), \\
    \frac{1}{N}\sum_{i=1}^{N} g_{t}(z,w_{i}), \quad w_{1},\hdots,w_{N}\overset{i.i.d.}{\sim} D_t(z). \\
    \end{cases}
\end{equation}    

\noindent Then, given a starting point $z_{0}$, the stochastic primal-dual algorithm performs the update 
\begin{equation}
    z_{t+1} = \hat{\mathcal{G}}_{t}(z_{t}), \ \text{where} \ \hat{\mathcal{G}}_{t}(z_{t}) = \Pi_{\zDomain}\left( z_{t}-\eta H_{t}(z_{t})\right)
\end{equation}

Crucial to our analysis will be providing reasonable assumptions regarding the quality of the gradient estimator $H_{t}$. The case where $H_{t}(z) = g_{t}(z,w_{1})$ is particularly appealing in applications such as competitive markets,  strategic classification, etc.,  where $g_{t}(z,w_{1})$ can be computed using an observation of $w$ (in our example in competitive markets, we would observe the demands $a$ and $b$). 

We are interested in providing results in expectation as well as high-probability. 
A common assumption throughout the literature is to use a sub-Gaussian error model on this gradient error quantity\textemdash an observation supported by the central limit theorem when using a sufficiently large batch size $N$ in \eqref{eqn:gradient_estimator}. While this may hold in some cases, it has been observed that this requires a prohibitively large set of data while also assuming the data is of sufficiently good quality \cite{vladimirova2020sub}; this has also been observed in works on stochastic gradient methods such as in~\cite{gurbuzbalaban2020heavy,simsekli2019tail}.  A more mild assumption then is to assume a larger class of heavy-tailed distributions known as sub-Weibull distributions, which we formalize in the following.

\begin{definition}{(\textit{\textbf{Sub-Weibull Random Variable}}~\cite{vladimirova2020sub})}
The distribution of a random variable $\xi$ is sub-Weibull, denoted $\xi\sim\sW(\theta,\nu)$, if there exists $\theta > 0,\nu>0$ such that 
$\norm{z}_{p}\leq \nu p^{\theta}$, 
for all $p\geq1$.
\hfill $\Box$
\end{definition}


\begin{assumption}{(\textit{\textbf{Stochastic Framework}})}
\label{assumption_six:sub_weibull}
Denote the gradient error incurred throughout the stochastic algorithm as $\xi_{t} = H_{t}(z_{t}) - G_{t}(z_{t})$. Then there exists constant $\theta,\nu>0$ and a sequence $\{\nu_{t}\}_{t\geq0}\subseteq \real_{+}$ such that the following hold:
\begin{enumerate}
    \item \textit{\textbf{Sub-Weibull Gradient Error}}. For each $t\geq 0$, $\norm{\xi_{t}}$ is a sub-Weibull random variable such that $\norm{\xi_{t}} \sim \sW(\theta,\nu_{t})$.
    \item \textit{\textbf{Bounded Variance Proxies}}. The sequence of variance proxies $\{\nu_{t}\}_{t\geq0}$ is bounded by $\nu$. \hfill $\Box$
\end{enumerate}
\end{assumption}

With this assumption, the main convergence result is stated next. 

\begin{theorem}
\label{thm:stochastic_primal_dual} 
Suppose that Assumptions \ref{assumption_one:strong}-\ref{assumption_six:sub_weibull} hold and $\frac{\varepsilon L}{\gamma}<1$. If $\eta$ satisfies the bound in \ref{thm:convergence_stepsize_condition} then the following hold:
\begin{enumerate}
    \item \textit{\textbf{Expectation}}. The sequence $\{z_{t}\}_{t\geq0} $ satisfies the bound in expectation
    \begin{equation}
    \label{thm:first_moment_expectation}
        \EX[\norm{z_{t}-\bar{z}_{t}}] \leq \alpha^{t}\norm{z_{0}-\bar{z}_{0}} + (1-\alpha)^{-1}\Delta + (1-\alpha)^{-1}\eta\nu.
    \end{equation}
    for all $t\geq0$, for any initial point $z_{0}\in\zDomain$, and  $\alpha :=\sqrt{1-\eta(\gamma-\varepsilon L)}$.
    \item \textit{\textbf{High Probability}}. For any $\delta\in(0,1)$, and $t\geq0$,
    \begin{equation}
    \label{thm:first_moment_high_probability}
    \mathbb{P}\left(\norm{z_{t}-\bar{z}_{t}} \leq \alpha^{t}\norm{z_{0}-\bar{z}_{0}} + \frac{\Delta}{1-\alpha} + c(\theta)\log^{\theta}\left(\frac{2}{\delta}\right) \frac{\eta\nu}{1-\alpha}\right) \geq 1-\delta \, .
    \end{equation}
 with $c(\theta) := \left(\frac{2e}{\theta}\right)^\theta$, for any initial point $z_{0}\in\zDomain$.
\hfill $\Box$
\end{enumerate}
\end{theorem}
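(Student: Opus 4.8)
The plan is to mirror the proof of the conceptual primal-dual tracking theorem, but now carry the gradient error $\xi_t = H_t(z_t) - G_t(z_t)$ through the recursion as an additive perturbation. First I would write $z_{t+1} = \Pi_{\zDomain}(z_t - \eta G_t(z_t) - \eta\xi_t)$ and use the nonexpansiveness of the projection together with $\bar z_t = \Pi_{\zDomain}(\bar z_t - \eta G_t(\bar z_t))$ to get
\begin{equation*}
\norm{z_{t+1} - \bar z_t} \leq \norm{(z_t - \bar z_t) - \eta(G_t(z_t) - G_t(\bar z_t))} + \eta\norm{\xi_t}.
\end{equation*}
By the same expansion as in the conceptual proof — using $\hat\gamma$-strong monotonicity and $\hat L$-Lipschitz continuity of $G_t$ from Lemma \ref{lemma:gradient_characterization}, and the stepsize condition $\eta \leq \hat\gamma/\hat L^2$ — the first term is bounded by $\alpha\norm{z_t - \bar z_t}$ with $\alpha = \sqrt{1 - \eta\hat\gamma}$. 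Combining with the drift bound $\norm{\bar z_{t+1} - \bar z_t} \leq \Delta$ via the triangle inequality gives the scalar recursion
\begin{equation*}
\norm{z_{t+1} - \bar z_{t+1}} \leq \alpha\norm{z_t - \bar z_t} + \Delta + \eta\norm{\xi_t}.
\end{equation*}

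Unrolling this recursion yields $\norm{z_t - \bar z_t} \leq \alpha^t\norm{z_0 - \bar z_0} + \sum_{j=0}^{t-1}\alpha^{t-1-j}(\Delta + \eta\norm{\xi_j})$, and bounding the geometric series by $(1-\alpha)^{-1}$ gives
\begin{equation*}
\norm{z_t - \bar z_t} \leq \alpha^t\norm{z_0 - \bar z_0} + \frac{\Delta}{1-\alpha} + \eta\sum_{j=0}^{t-1}\alpha^{t-1-j}\norm{\xi_j}.
\end{equation*}
For the expectation bound, I take expectations and use $\EX\norm{\xi_j} \leq \nu_j \leq \nu$ (the $p=1$ case of the sub-Weibull definition combined with the bounded-variance-proxy assumption), so the last term is at most $\eta\nu(1-\alpha)^{-1}$, which is \eqref{thm:first_moment_expectation}. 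Note this requires only the first absolute moment, so the conditional/unconditional subtlety of the error distribution does not bite here.

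For the high-probability bound, the key object is the weighted sum $S_t := \eta\sum_{j=0}^{t-1}\alpha^{t-1-j}\norm{\xi_j}$ of sub-Weibull random variables. The plan is to invoke a closure property of the sub-Weibull class: a (possibly infinite) nonnegative-weighted sum of $\sW(\theta,\nu_j)$ variables is again sub-Weibull, with variance proxy controlled by $\sum_j (\text{weight}_j)\nu_j$ up to a $\theta$-dependent constant — this is the analogue of the Orlicz-norm triangle inequality for the $\psi_{1/\theta}$ norm, as used in \cite{vladimirova2020sub}. This gives $S_t \sim \sW(\theta, \tilde\nu_t)$ with $\tilde\nu_t \lesssim \eta\nu\sum_{j}\alpha^{t-1-j} \leq \eta\nu/(1-\alpha)$. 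Then I apply the standard sub-Weibull tail bound: $\xi \sim \sW(\theta,\nu)$ implies $\mathbb{P}(\xi \geq \nu (2e/\theta)^\theta \log^\theta(2/\delta)) \leq \delta$ (obtained from Markov on $\xi^p$ with the moment bound, optimizing over $p$, which produces exactly the constant $c(\theta) = (2e/\theta)^\theta$). Substituting the tail bound for $S_t$ into the unrolled inequality yields \eqref{thm:first_moment_high_probability}.

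The main obstacle is the high-probability step: one must be careful that $\norm{\xi_j}$ is sub-Weibull \emph{unconditionally} with a uniformly bounded proxy (Assumption \ref{assumption_six:sub_weibull} is stated to give exactly this, so no martingale/conditioning argument is strictly needed — the weighted-sum closure lemma applies directly to the marginals, since Orlicz norms satisfy the triangle inequality regardless of dependence), and that the sub-Weibull summation lemma is applied with the correct scaling of the variance proxy in the number of terms. The geometric weights are what keep $\tilde\nu_t$ bounded uniformly in $t$ by $\eta\nu/(1-\alpha)$ rather than growing; verifying the constant lands exactly at $c(\theta)$ requires tracking the $p^\theta$ growth through both the summation lemma and the Markov-inequality optimization. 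Everything else is a routine repeat of the deterministic argument.
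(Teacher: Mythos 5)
Your proposal is correct and follows essentially the same route as the paper: the same triangle-inequality decomposition into drift plus a contracted error plus $\eta\norm{\xi_t}$, the same unrolled geometric recursion, the $p=1$ sub-Weibull moment bound for the expectation result, and the $L^p$/Orlicz triangle inequality applied to the geometrically weighted sum of $\norm{\xi_j}$ to get a $\sW(\theta,\eta\nu/(1-\alpha))$ variable whose tail bound produces exactly the constant $c(\theta)=(2e/\theta)^{\theta}$. Your remark that no martingale or conditioning argument is needed because the moment-based closure applies to possibly dependent variables matches the paper's use of its closure lemma, which is stated for "possibly coupled" sub-Weibull random variables.
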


We note that the noise terms above are diametrically opposed functions of the step-size. While the drift term grows larger for small step-size, the gradient noise decreases for smaller step-size values. This relationship makes removing the contribution of any one source of perturbation impossible. We also note that the high-probability bound scales as $\log(\delta^{-1})$, as opposed to classical bounds derived using Markov's bound that scale as $\delta^{-1}$. 

Before proving the theorem, we provide supporting lemmas that will be used in the proof.

\begin{lemma}{(\textit{\textbf{Equivalent Characterizations}})} If $\xi$ is a sub-Weibull random variable with tail parameter $\theta > 0$, then the following characterizations are equivalent (we recall that $\norm{z}_{k} = \EX[\vert z\vert^{k}]^{1/k}$):
\begin{enumerate}
    \item[(c1)] \label{lemma:tail_probability} Tail Probability: $\exists \ \nu_{1}>0$ such that $\mathbb{P}(|z| \geq \epsilon ) \leq 2\exp(-\left(\epsilon/\nu_{1})\right)^{1/\theta}$ 
    for all $\epsilon>0$.
    \item[(c2)] \label{lemma:moment} Moment: $\exists \ \nu_{2}>0$ such that $\norm{z}_{k}\leq \nu_{2} k^{\theta} $ for all $k\geq1$.
\end{enumerate}
Moreover, if (c2) holds for a given $\nu_{2} > 0$, then (c1) holds with $\nu_{1} = \left(\frac{2e}{\theta}\right)^{\theta}\nu_{2}$. 
\hfill $\Box$
\end{lemma}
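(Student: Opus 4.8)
\noindent\textit{Proof plan.} The plan is to prove the two implications (c1)$\Rightarrow$(c2) and (c2)$\Rightarrow$(c1) separately, carrying out the latter quantitatively so as to also obtain the ``moreover'' claim that $\nu_1=(2e/\theta)^\theta\nu_2$ suffices. Throughout I write $\xi$ for the random variable in question and use that $k\mapsto\norm{\xi}_k=\EX[|\xi|^k]^{1/k}$ is non-decreasing, so it is harmless to let $k$ range over all real numbers $\geq 1$ and optimize over it.

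First I would treat (c2)$\Rightarrow$(c1). Fix $\epsilon>0$. Markov's inequality applied to $|\xi|^k$ gives, for every $k\geq 1$, $\mathbb{P}(|\xi|\geq\epsilon)\leq\epsilon^{-k}\EX[|\xi|^k]\leq(\nu_2 k^\theta/\epsilon)^k$. The next step is to minimize the right-hand side over $k$: writing it as $\exp(k\log(\nu_2 k^\theta/\epsilon))$, the exponent is minimized at $k_\star=e^{-1}(\epsilon/\nu_2)^{1/\theta}$, and when $\epsilon$ is large enough that $k_\star\geq 1$ this yields $\mathbb{P}(|\xi|\geq\epsilon)\leq\exp(-(\theta/e)(\epsilon/\nu_2)^{1/\theta})=\exp(-(\epsilon/((e/\theta)^\theta\nu_2))^{1/\theta})\leq 2\exp(-(\epsilon/\nu_1)^{1/\theta})$ with $\nu_1=(2e/\theta)^\theta\nu_2$. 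For moderate $\epsilon$ (so $k_\star<1$) I would instead take $k=1$, giving $\mathbb{P}(|\xi|\geq\epsilon)\leq\nu_2/\epsilon$, and for the smallest $\epsilon$ simply use $\mathbb{P}(|\xi|\geq\epsilon)\leq 1$, which is dominated by the target whenever $(\epsilon/\nu_1)^{1/\theta}\leq\log 2$. The remaining work is to check that these three regimes exhaust $(0,\infty)$ and that the three bounds fit together; it is precisely here that the extra factor $2^\theta$ in $\nu_1$ (rather than the smaller $(e/\theta)^\theta\nu_2$) is spent to supply the slack. The hard part will be this bookkeeping over the intermediate range of $\epsilon$ — the individual estimates are routine, but pinning down the constant exactly is where the care goes.

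Then I would prove (c1)$\Rightarrow$(c2). Using the tail-integral identity together with the tail bound with parameter $\nu_1$, $\EX[|\xi|^k]=\int_0^\infty k t^{k-1}\mathbb{P}(|\xi|\geq t)\,dt\leq 2k\int_0^\infty t^{k-1}\exp(-(t/\nu_1)^{1/\theta})\,dt$. The substitution $s=(t/\nu_1)^{1/\theta}$ turns the integral into $\nu_1^k\theta\int_0^\infty s^{\theta k-1}e^{-s}\,ds=\nu_1^k\theta\,\Gamma(\theta k)$, hence $\norm{\xi}_k\leq(2k\theta)^{1/k}\,\nu_1\,\Gamma(\theta k)^{1/k}$. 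I would then bound $\Gamma(\theta k)\leq(\theta k)^{\theta k}$, so that $\Gamma(\theta k)^{1/k}\leq(\theta k)^\theta=\theta^\theta k^\theta$, and observe that $(2k\theta)^{1/k}\leq 2e^{1/e}\max\{1,\theta\}$ is bounded over $k\geq 1$; combining these gives $\norm{\xi}_k\leq\nu_2 k^\theta$ for a suitable $\nu_2=C(\theta)\nu_1$, which is (c2). Since no explicit constant is asserted in this direction, this crude estimate is enough, and the equivalence is complete.
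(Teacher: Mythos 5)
First, a caveat on the comparison: the paper never proves this lemma itself\textemdash it points to \cite{vladimirova2020sub,wong2020lasso}\textemdash so the benchmark is the standard argument in those references, which is exactly the route you sketch (Chernoff/Markov on $|\xi|^k$ optimized over $k$ for (c2)$\Rightarrow$(c1); tail-integral plus a Gamma-function bound for (c1)$\Rightarrow$(c2)). Your (c1)$\Rightarrow$(c2) half is essentially fine, with one slip: $\Gamma(\theta k)\leq(\theta k)^{\theta k}$ is false when $\theta k<1$ (e.g. $\Gamma(1/2)=\sqrt{\pi}>2^{-1/2}$). Since you claim no explicit constant in that direction this is harmless and easily patched, e.g. via $2k\theta\,\Gamma(\theta k)=2\,\Gamma(\theta k+1)\leq 2\max\{1,(\theta k)^{\theta k}\}$. (Also, the opening remark about extending from integer to real $k$ is moot: the paper's moment condition is already stated for all real $p\geq1$.)

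The genuine gap is in (c2)$\Rightarrow$(c1), and it sits precisely in the step you defer as ``bookkeeping'': the three regimes do not fit together for all $\theta>0$ with $\nu_1=(2e/\theta)^{\theta}\nu_2$, and the factor $2^{\theta}$ of slack you plan to spend there is not enough. Concretely, the trivial bound $\mathbb{P}\leq 1$ covers only $\epsilon\leq\nu_1\log^{\theta}2=(2e\log 2/\theta)^{\theta}\nu_2$, while the optimized Markov bound needs $k_\star\geq1$, i.e. $\epsilon\geq e^{\theta}\nu_2$; these ranges overlap only when $\theta\leq 2\log 2$. In the intermediate window the best available estimate is the $k=1$ bound $\nu_2/\epsilon$ (the map $k\mapsto(\nu_2k^{\theta}/\epsilon)^{k}$ is increasing for $k>k_\star$), and checking it against $2\exp(-(\epsilon/\nu_1)^{1/\theta})$ at the left end of the window shows it suffices exactly when $(\theta/(2e\log2))^{\theta}\leq1$, i.e. $\theta\leq 2e\log2\approx 3.77$. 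Beyond that no bookkeeping can rescue the stated constant, because the ``moreover'' claim is then false as written: take $\xi\equiv\nu_2$ deterministic, so $\norm{\xi}_k=\nu_2\leq\nu_2k^{\theta}$ for all $k\geq1$, yet at $\epsilon=\nu_2$ one has $\mathbb{P}(|\xi|\geq\epsilon)=1>2\exp(-\theta/(2e))=2\exp(-(\epsilon/\nu_1)^{1/\theta})$ once $\theta>2e\log2$. So your plan does prove (c2)$\Rightarrow$(c1) with some $\nu_1=C(\theta)\nu_2$\textemdash for instance $C(\theta)=\max\{(e/\theta)^{\theta},(e/\log2)^{\theta}\}$ closes all three regimes for every $\theta$\textemdash and it delivers the advertised constant $(2e/\theta)^{\theta}$ only for $\theta\leq 2e\log2$. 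You should either restrict $\theta$ to that range (which covers the sub-Gaussian and sub-exponential cases), enlarge $\nu_1$, or flag that the constant quoted from the literature cannot hold verbatim for large $\theta$; note the same constant $c(\theta)=(2e/\theta)^{\theta}$ is reused in the paper's Theorem 3.3, so the issue propagates there as well.
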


\begin{lemma}{(\textbf{\textit{Sub-Weibull Inclusion}})} 
\label{prop:inclusionSubWeibull}
If $\xi\sim\sW(\theta,\nu)$ based on (c2) and $\theta',\nu'>0$ such that $\theta\leq\theta'$ and $\nu\leq\nu'$ then $\xi\sim\sW(\theta',\nu')$.
\hfill $\Box$
\end{lemma}

\begin{lemma}{(\textbf{\textit{Sub-Weibull Closure}})} 
\label{prop:closureSubWeibull}
If $\xi_{1} \sim \sW(\theta_1,\nu_1)$, $\xi_{2} \sim \sW(\theta_2,\nu_2)$ are (possibly coupled) sub-Weibull random variables based on (c2) and $c\in\mathbb{R}$, then the following hold:
\begin{enumerate}
    \item  $\xi_{1}+\xi_{2}\sim\sW(\max \{\theta_1,\theta_2\}, \nu_1+\nu_2)$;
    \item  $\xi_{1}\xi_{2}\sim \sW(\theta_1 + \theta_2, 
    \psi(\theta_1, \theta_2) \nu_1 \nu_2)$, $\psi(\theta_1, \theta_2) := (\theta_1 + \theta_2)^{\theta_1 + \theta_2} / (\theta_1^{\theta_1} \theta_2^{\theta_2})$;
    \item $c\xi_{1}\sim \sW(\theta_1, |c|\nu_1)$. \hfill $\Box$ 
\end{enumerate}
\end{lemma}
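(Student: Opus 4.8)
The plan is to verify the three claims separately; in each case I start from the moment characterization (c2), invoke one elementary inequality for $L^k$ norms, and then use $k\ge 1$ to consolidate the powers of $k$. Throughout, $\norm{\cdot}_k = \EX[|\cdot|^k]^{1/k}$ denotes the $L^k$ norm as in characterization (c2), so that $\xi_i\sim\sW(\theta_i,\nu_i)$ means $\norm{\xi_i}_k \le \nu_i k^{\theta_i}$ for all $k\ge 1$.

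Item~3 is immediate from positive homogeneity of the norm: $\norm{c\xi_1}_k = |c|\,\norm{\xi_1}_k \le |c|\nu_1 k^{\theta_1}$ for all $k\ge 1$, which is exactly (c2) for the pair $(\theta_1,|c|\nu_1)$. For item~1, Minkowski's inequality gives $\norm{\xi_1+\xi_2}_k \le \norm{\xi_1}_k + \norm{\xi_2}_k \le \nu_1 k^{\theta_1} + \nu_2 k^{\theta_2}$ for all $k\ge 1$; since $k\ge 1$ implies $k^{\theta_i}\le k^{\max\{\theta_1,\theta_2\}}$, the right-hand side is at most $(\nu_1+\nu_2)\,k^{\max\{\theta_1,\theta_2\}}$, which is (c2) for $(\max\{\theta_1,\theta_2\},\,\nu_1+\nu_2)$. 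No independence or coupling hypothesis enters here, since Minkowski holds for arbitrary jointly distributed random variables.

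The substantive step is item~2. Fix $k\ge 1$ and apply H\"older's inequality to $\EX[|\xi_1|^k|\xi_2|^k]$ with the conjugate exponents $p := (\theta_1+\theta_2)/\theta_1$ and $q := (\theta_1+\theta_2)/\theta_2$, which satisfy $p,q\ge 1$ and $1/p+1/q=1$. This yields $\EX[|\xi_1\xi_2|^k] \le \EX[|\xi_1|^{kp}]^{1/p}\EX[|\xi_2|^{kq}]^{1/q}$, i.e.\ $\norm{\xi_1\xi_2}_k \le \norm{\xi_1}_{kp}\,\norm{\xi_2}_{kq}$. Since $kp\ge 1$ and $kq\ge 1$, applying (c2) at orders $kp$ and $kq$ gives $\norm{\xi_1\xi_2}_k \le \nu_1(kp)^{\theta_1}\,\nu_2(kq)^{\theta_2} = \nu_1\nu_2\,p^{\theta_1}q^{\theta_2}\,k^{\theta_1+\theta_2}$, and a direct computation gives $p^{\theta_1}q^{\theta_2} = (\theta_1+\theta_2)^{\theta_1+\theta_2}/(\theta_1^{\theta_1}\theta_2^{\theta_2}) = \psi(\theta_1,\theta_2)$. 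Hence $\norm{\xi_1\xi_2}_k \le \psi(\theta_1,\theta_2)\,\nu_1\nu_2\,k^{\theta_1+\theta_2}$ for all $k\ge 1$, which is (c2) for $(\theta_1+\theta_2,\,\psi(\theta_1,\theta_2)\nu_1\nu_2)$.

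The only place demanding care is this last step: the crude choice $p=q=2$ (Cauchy--Schwarz) would only deliver the constant $2^{\theta_1+\theta_2}$, which coincides with $\psi$ when $\theta_1=\theta_2$ but is loose otherwise. Choosing $p$ and $q$ proportional to $\theta_1$ and $\theta_2$ is precisely what optimizes the resulting constant and reproduces $\psi$, so the ``obstacle'' is really just spotting the right H\"older split. Everything else is bookkeeping with the $k\ge 1$ condition to ensure every norm order that appears is admissible.
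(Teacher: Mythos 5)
Your proof is correct and follows essentially the same route as the standard argument in the references the paper cites for this lemma (Vladimirova et al.; Wong et al.): homogeneity for item~3, Minkowski plus $k\ge 1$ for item~1, and H\"older with the exponents $p=(\theta_1+\theta_2)/\theta_1$, $q=(\theta_1+\theta_2)/\theta_2$ for item~2, which is exactly what produces the constant $\psi(\theta_1,\theta_2)$. The paper itself defers the proof to those references, so there is nothing further to compare.
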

The proofs of these lemmas can be found in \cite{vladimirova2020sub,wong2020lasso}.

\begin{proof}\emph{of \ref{thm:stochastic_primal_dual}}.
As before, we have that $\norm{z_{t+1} - \bar{z}_{t+1} } \leq  \norm{z_{t+1} - \bar{z}_{t} } + \Delta_{t}$
where
\begin{align*}
     \norm{z_{t+1} - \bar{z}_{t} } 
      & \leq \norm{(z_{t}-\bar{z}_{t})-\eta(H_{t}(z_{t})-G_{t}(\bar{z}_{t})} \\
      & = \norm{(z_{t}-\bar{z}_{t})-\eta(G_{t}(z_{t})-G_{t}(\bar{z}_{t}) - \eta\xi_{t}}  \leq \alpha\norm{z_{t} - \bar{z}_{t}} + \eta\norm{\xi_{t}}.
\end{align*}
This yields that stochastic recursion
$
\norm{z_{t} - \bar{z}_{t} } \leq \alpha^{t} \norm{z_{0} - \bar{z}_{0}} + \Delta\sum_{i=0}^{t} \alpha^{i} + \eta\sum_{i=0}^{t} \alpha^{i} \norm{\xi_{t-i}}
$. 
Recall that when $\eta$ satisfies the condition in \eqref{thm:convergence_stepsize_condition}, $\alpha<1$. Hence assuming this fact and taking the expectation of both sides yields
$$
\EX\norm{z_{t} - \bar{z}_{t} } \leq \alpha^{t} \norm{z_{0} - \bar{z}_{0}} + \frac{\Delta}{1-\alpha} + \eta\sum_{i=0}^{t} \alpha^{i} \EX\norm{\xi_{t-i}}
$$
so that the result in \eqref{thm:first_moment_expectation} follows. To prove the result in \eqref{thm:first_moment_high_probability}, we denote $e_{t} = \norm{z_{t}-\bar{z}_{t}}$, $\omega_{t} = \alpha_{t}\norm{z_{0} - \bar{z}_{0}} + \Delta(1-\alpha)^{-1}$, and $\sigma_{t} = \eta\sum_{i=0}^{t}\alpha^{i}\xi_{t-i}$. Observe that, due to our closure properties, 
$$
\norm{\sigma_{t}}_{p} \leq \sum_{i=0}^{t} \alpha^{i} \EX[ \norm{\xi_{t}}]^{p}]^{1 / p} \leq \frac{\eta\nu}{1-\alpha}p^{\theta}
$$
for any $p \geq 1$ and hence $\sigma_{t}\sim \sW(\theta,\eta\nu(1-\alpha)^{-1})$. It follows from Lemma \ref{lemma:tail_probability} (c1) that 
\begin{equation}
        \mathbb{P}\left(\sigma_{t} \geq \epsilon \right) \leq 2\exp \left(-\frac{\theta}{2e}
        \left( \frac{(1-\alpha)\epsilon}{\eta\nu} \right)^{\frac{1}{\theta}} \right),
\end{equation}
and setting the right hand side above equal to $\delta >0$ yields $\epsilon  = c(\theta)\log^{\theta}\left(\frac{2}{\delta}\right)\eta\nu(1-\alpha)^{-1}$.  Now, observe that our stochastic recursion implies that for any $a>0$, $\mathbb{P}(\omega_{t} + \sigma_{t} \geq a) \geq \mathbb{P}(e_{t} \geq a)$.
It follows that setting $a = \omega_{t} + \epsilon$ yields $\mathbb{P}(e_{t} \leq \omega_{t} + \epsilon ) \geq \mathbb{P}(\omega_{t}+\sigma_{t} \leq \omega_{t} + \epsilon) = \mathbb{P}(\sigma_{t}\leq \epsilon)\geq 1-\delta$, 
thus the result follows.
\end{proof}

\section{Numerical Simulations on Electric Vehicle Charging}

\begin{figure}[t!]
\label{fig:1}
 \centering
 \begin{minipage}[b]{0.48\linewidth}
    \centering
    \includegraphics[width=\textwidth]{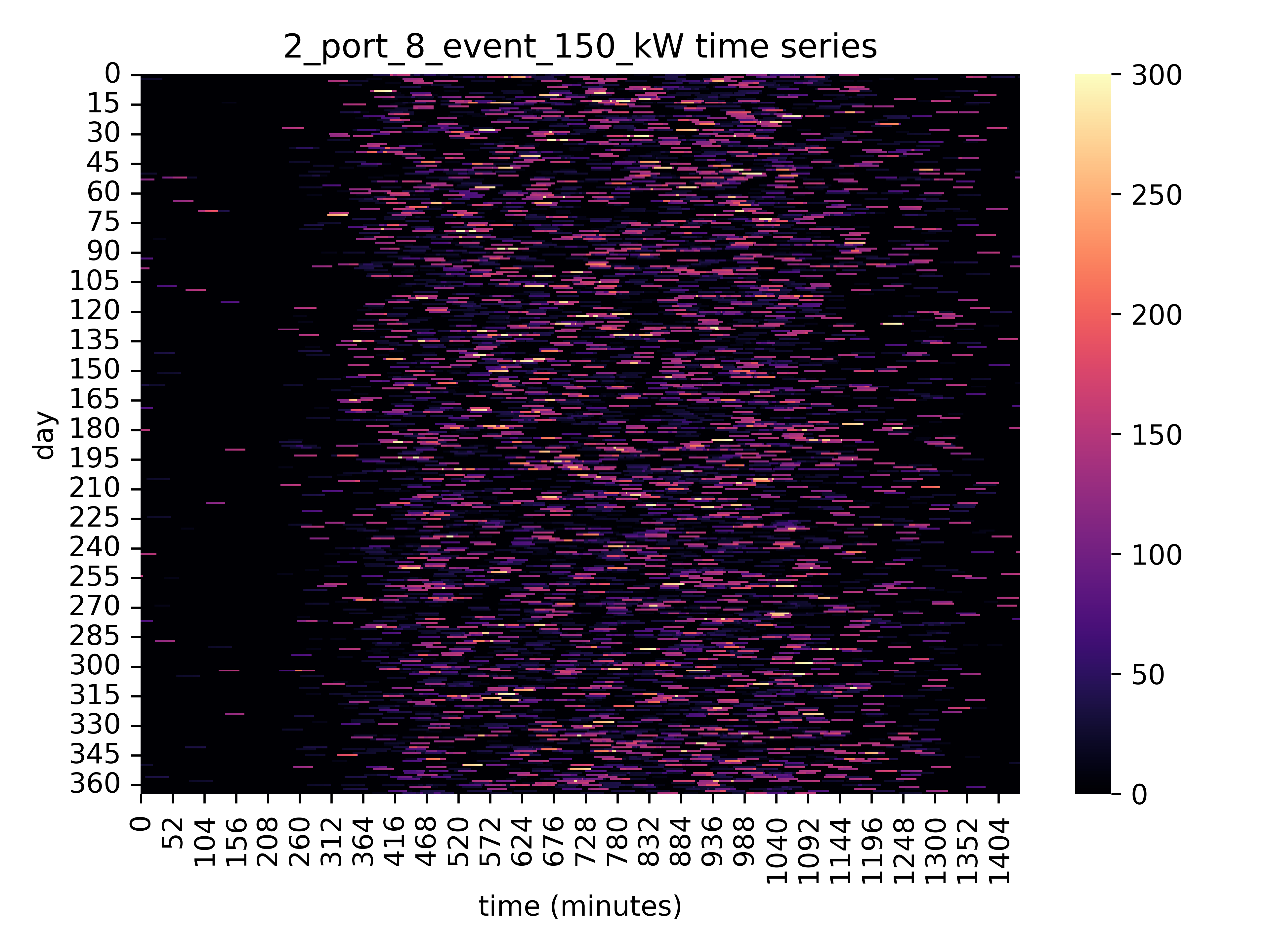}
    \small{(a)}
 \end{minipage}
 \quad
 \begin{minipage}[b]{0.48\linewidth}
    \centering
    \includegraphics[width=\textwidth]{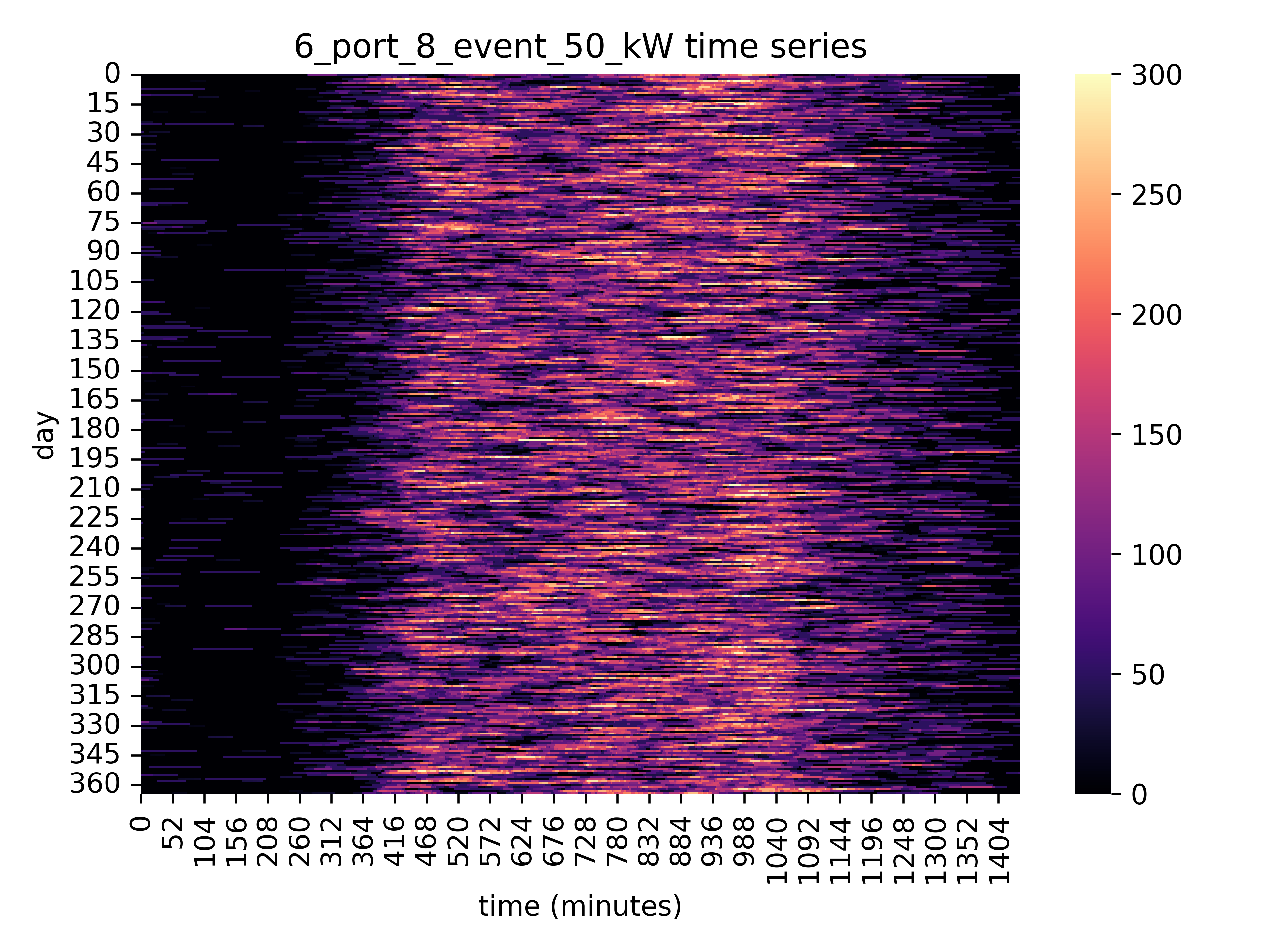} \small{(b)}
\end{minipage}
\caption{Demand time series visualization: horizontal axis is time of day, vertical axis is the day of the year between 1 and 365. Brightness indicates intensity of the demand.}
\vspace{-.5cm}
\end{figure}

\begin{figure}[t]
\label{fig:2}
 \centering
 \begin{minipage}[b]{0.48\linewidth}
    \centering
    \includegraphics[width=\textwidth]{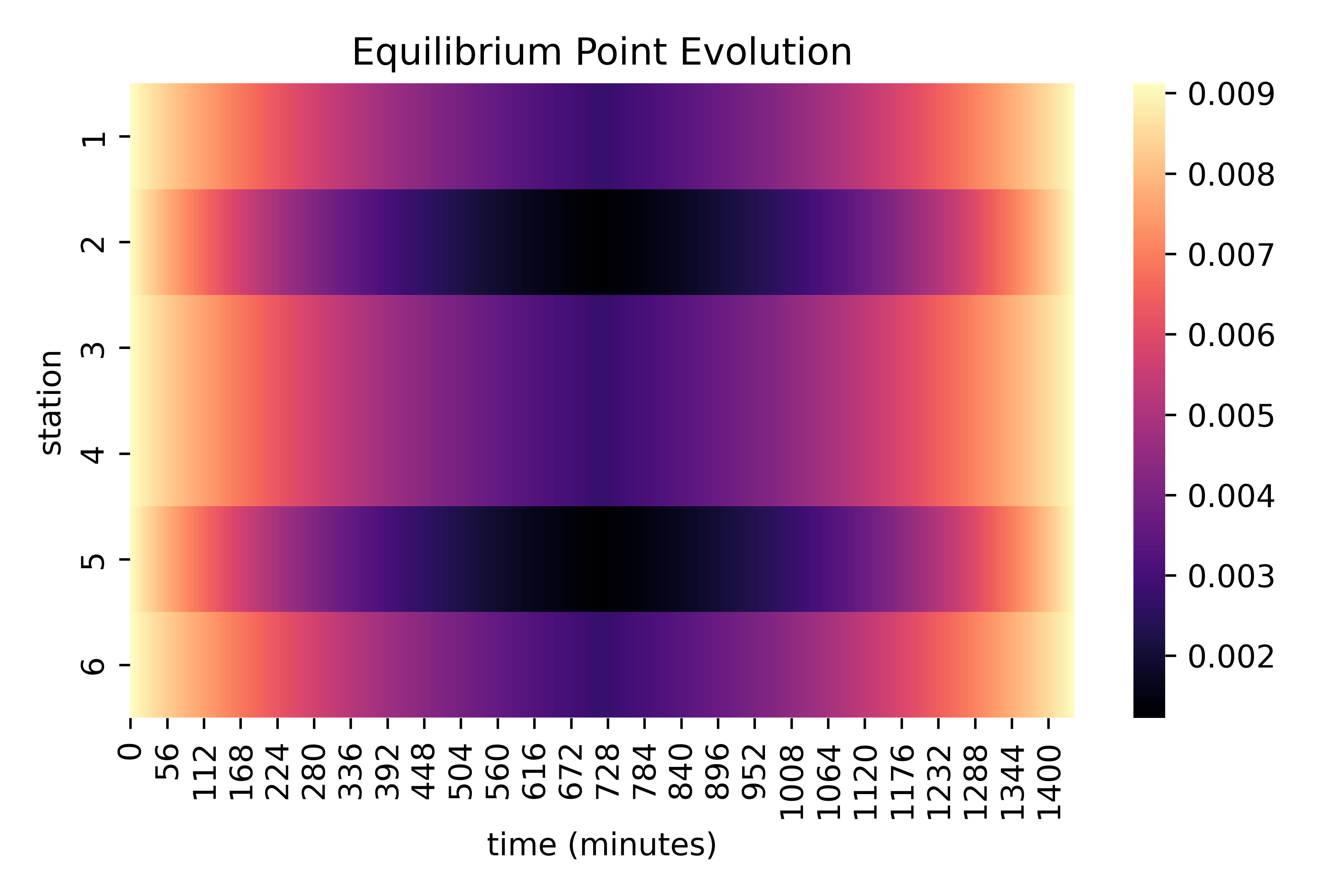}
    \small{(a)}
 \end{minipage}
 \enspace
 \begin{minipage}[b]{0.48\linewidth}
    \centering
    \includegraphics[width=\textwidth]{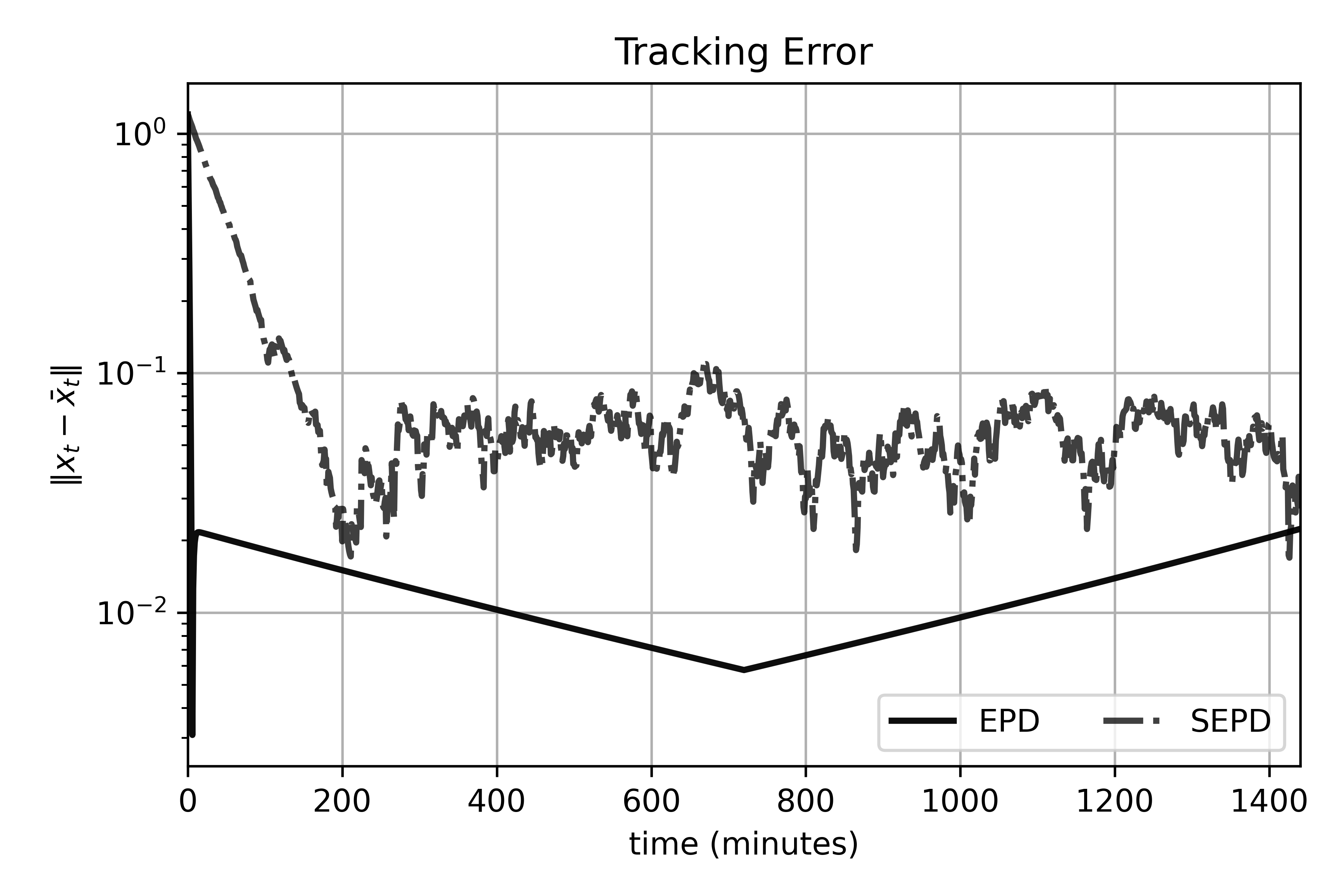} \small{(b)}
\end{minipage}
\caption{Results: in (a) we depict the evolution of the equilibrium points over the time horizon plotted in absolute value. In (b), we depict the tracking error for both algorithms. }
\vspace{-.5cm}
\end{figure}

In this section, we provide a demonstration of an online electric vehicle charging market, as described in \eqref{application:charging_market}, with time series demand data from \cite{gilleran2021electric}. The data describes a years of worth of electricity demand with entries for each minute of the year. Each file represents a different type of charging station positioned near commercial uses with varying number of ports (2 or 6), frequency of use (2, 8, or 16 event), and port power output (50, 150, or 350 kW). We randomly allocate each provider with three $8$-event stations and draw samples from each day of the year. The demand data is normalized by first subtracting the mean across each minute and dividing by the variance. 
A representative example of the raw data is provided in Figure~1, with time in minutes along the horizontal axis, day of the year along the vertical, and color intensity representing demand value. The price elasticity is dictated by the function $h_{t}(p) = (-c(p)/m \vert t-m \vert + c(p))$ where $p$ is the station's port power and $c(p)$ is given by $c(p) = 0.3$ for $p\in\{50,150\}$ and $c(p) = 0.5$ for $p=350$. The elasticity matrices are then given by $(A_{1}^{t})_{ij} = -h_{t}(p_{i})\delta_{i,j}$, $(B_{1}^{t})_{ij} = -h_{t}(p_{i})\delta_{i,j}$ for $i\in[3]$ where $p_{i}$ is the power of each port at the $i$th station belonging to the provider and $B_{2}^{t}=-B_{1}^{t}$ and $A_{2}^{t} = -A_{1}^{t}$. For the sake of simplicity, we consider service providers with unit charging speed utility rates and zero location-based utility. From this we conclude that for all $t$, $G_{t}$ is $1$-strongly monotone and $1$-Lipschitz. Hence our results apply provided that $\eta < 1 /3$.

We compute the equilibrium points by executing a batch primal-dual algorithm for 2000 iterations with a step size of $\eta = 0.001$. We then run the online primal-dual and stochastic primal-dual algorithms over each minute of the time series data and plot the distance to the solutions in Figure~2. We observe that the primal-dual algorithm is capable of reasonably tracking the trajectory. The noise incurred by the stochastic algorithm clearly prevents it from having identical performance, however the trajectory does decrease to an acceptable level after overcoming transient behavior for approximately 200 time steps.



\printbibliography
\end{document}